\numberwithin{equation}{section}
\newtheorem{thm}{Theorem}[]
\newtheorem{lem}[thm]{Lemma}
\newtheorem{cor}[thm]{Corollary}
\theoremstyle{definition}
\newtheorem{prob}[thm]{Problem}
\newcommand{\R}{\mathbf{R}}
\newcommand{\C}{\mathbf{C}}
\newcommand{\Z}{\mathbf{Z}}
\newcommand{\N}{\mathbf{N}}
\newcommand{\Ad}{\operatorname{Ad}}
\newcommand{\rL}{\mathord{\text{\rm L}}}
\newcommand{\rB}{\mathord{\text{\rm B}}}
\newcommand{\rC}{\mathord{\text{\rm C}}}
\newcommand{\rE}{\mathord{\text{\rm E}}}
\newcommand{\Ball}{\mathord{\text{\rm Ball}}}
\newcommand{\spn}{\mathord{\text{\rm span}}}
\newcommand{\AC}{\mathord{\text{\rm AC}}}
\newcommand{\dpr}{^{\prime\prime}}
\begin{document}

\title[Singular MASAs in type III factors and Connes' Bicentralizer Property]{Singular MASAs in type III factors and \\ Connes' Bicentralizer Property}

\begin{abstract}
We show that any type ${\rm III_1}$ factor with separable predual satisfying Connes' Bicentralizer Property (CBP) has a singular maximal abelian $\ast$-subalgebra that is the range of a normal conditional expectation. We also investigate stability properties of CBP under finite index extensions/restrictions of type ${\rm III_1}$ factors.
\end{abstract}

\author{Cyril Houdayer}
\address{Laboratoire de Math\'ematiques d'Orsay\\ Universit\'e Paris-Sud\\ CNRS\\ Universit\'e Paris-Saclay\\ 91405 Orsay\\ FRANCE}
\email{cyril.houdayer@math.u-psud.fr}
\thanks{CH is supported by ERC Starting Grant GAN 637601}

\author{Sorin Popa}
\address{Mathematics Department \\ University of California at Los Angeles \\ CA 90095-1555 \\ USA}
\email{popa@math.ucla.edu}
\thanks{SP is supported by NSF Grant DMS-1400208, a Simons Fellowship and Chaire d'Excellence de la FSMP~2016}

\subjclass[2010]{46L10, 46L36}
\keywords{Connes' bicentralizer property; Singular maximal abelian $\ast$-subalgebras; Type ${\rm III}$ factors}

\maketitle


\section{Introduction}

Let $M$ be any von Neumann algebra and $A \subset M$ any maximal abelian $\ast$-subalgebra (abbreviated MASA). Denote by $\mathcal N_M(A) = \{u \in \mathcal U(M) : u A u^* = A\}$ the group of unitaries in $M$ that normalize $A$. We say that $A \subset M$ is {\em singular} if $\mathcal N_M(A) = \mathcal U(A)$, that is, the only unitaries in $M$ that normalize $A$ are in $A$. 
It has been shown in \cite{Po82} that any diffuse semifinite von Neumann algebra with separable predual and any type ${\rm III_\lambda}$ factor ($0 \leq \lambda < 1$) with separable predual has a singular MASA.  A new approach to 
this result has been recently given in  \cite{Po16}.  But while many examples of  type ${\rm III_1}$ factors are known to have singular MASAs,  the problem of whether any type ${\rm III_1}$ factor has a singular MASA remained open.

Following \cite{Co80}, 
if $M$ is a type ${\rm III_1}$ factor with a normal faithful state 
$\varphi$,  then the {\em bicentralizer} of $(M, \varphi)$ is defined by
$$\rB(M, \varphi) = \left \{ a \in M : \lim_n \|x_n a - a x_n\|_\varphi = 0, \forall (x_n)_n \in \AC(M, \varphi) \right \}$$
where $\AC(M, \varphi) = \left\{ (x_n)_n \in \ell^\infty(\N, M) : \lim_n \|x_n \varphi - \varphi x_n\| = 0\right \}$. It is known that $\rB(M, \varphi) \subset M$ is a von Neumann subalgebra that is globally invariant under the modular flow $\sigma^\varphi$. By Connes--St\o rmer transitivity theorem (\cite{CS76}), it follows that if $\rB(M, \varphi) = \C 1$ for some normal faithful state $\varphi$ 
on $M$, then $\rB(M, \psi) = \C 1$ for any normal faithful state $\psi$ on $M$ (cf. \cite[Corollary 1.5]{Ha85}). 
We say that $M$ satisfies Connes' Bicentralizer Property (abbreviated CBP) if $\rB(M, \varphi) = \C 1$ for some 
(equivalently, for any) normal faithful state $\varphi$ on $M$. 

Haagerup showed in \cite{Ha85} that any amenable type ${\rm III_1}$ factor with separable predual satisfies CBP. 
Together with the work of Connes \cite{Co85}, this 
showed the uniqueness of the amenable factor of type ${\rm III_1}$ with separable predual. Haagerup also obtained in \cite[Theorem 3.1]{Ha85} the following characterization of CBP: a type ${\rm III_1}$ factor $M$ with separable predual satisfies CBP if and only if it has a 
normal faithful state $\varphi$ such that $(M_\varphi)' \cap M = \C 1$. 
Several classes of nonamenable type ${\rm III_1}$ factors have been shown  to satisfy CBP: free Araki--Woods factors \cite{Ho08}; free product factors \cite{HU15}; nonamenable factors satisfying Ozawa's condition (AO) \cite{HI15}. However, Connes' Bicentralizer Problem 
is still open for arbitrary type ${\rm III_1}$ factors.

In this note, we prove that every factor $M$ with separable predual that has a normal faithful state $\varphi$ 
satisfying the condition $(M_\varphi)' \cap M = \C 1$, contains an abelian $\ast$-subalgebra $A \subset M_\varphi$ that's maximal abelian and singular in $M$ (see Theorem \ref{thm-singular}). We prove this result 
by adapting the argument used in \cite[Theorem 2.1]{Po16} for the type ${\rm II_1}$ case. By combining our result with Haagerup's characterization of CBP \cite{Ha85} explained above, we derive that any type ${\rm III_1}$ factor with separable predual satisfying CBP has a singular MASA that is the range of a normal conditional expectation. 
We end the paper with some results and comments about the stability of Connes' Bicentralizer Property 
for inclusions of type ${\rm III_1}$ factors with normal conditional expectation, under the assumption that the inclusion has finite index (see Theorem \ref{thm-CBP-finite-index}).

\subsection*{Acknowledgments} The first named author is grateful to Yusuke Isono for useful discussions.

\section*{Notation}

Let $M$ be any $\sigma$-finite von Neumann algebra. We denote by $(M, \rL^2(M), J, \rL^2(M)_+)$ the standard form of $M$. We moreover denote by $\mathcal Z(M)$ its center, by $\mathcal U(M)$ its group of unitaries and by $\Ball(M)$ its unit ball with respect to the uniform norm $\|\cdot\|_\infty$. For any normal faithful state $\varphi$ on $M$, we denote by $\xi_\varphi \in \rL^2(M)_+$ its canonical implementing vector, by $\sigma^\varphi$ its modular automorphism group and by $M_\varphi = \{x \in M : \sigma_t^\varphi(x) = x, \forall t \in \R\}$ its centralizer. We write $\|x\|_\varphi = \|x \xi_\varphi\|$ for every $x \in M$.

We say that a von Neumann subalgebra $N \subset M$ is with {\em normal conditional expectation} (abbreviated NCE) if there exists a normal faithful conditional expectation $\rE_N : M \to N$. Recall that $N \subset M$ is globally invariant under the modular automorphism group $\sigma^\varphi$ if and only if there exists a $\varphi$-preserving conditional expectation $\rE_N^\varphi : M \to N$ (see \cite[Theorem IX.4.2]{Ta03}).

\section{Singular MASAs in type ${\rm III_1}$ factors}\label{section:singular-MASA}

We recall from \cite{Po81} two results that will be used in the proof of Theorem \ref{thm-singular}. 

\begin{lem}[{\cite[Theorem 2.5]{Po81}}]\label{lem-technical-1}
Let $M$ be any $\sigma$-finite von Neumann algebra, $\varphi$ any normal faithful state on $M$ and $N \subset M_\varphi$ be any von Neumann subalgebra such that $N' \cap M \subset N$.

For any finite dimensional abelian $\ast$-subalgebra $D \subset N$, any $x_1, \dots, x_n \in M$ and any $\varepsilon > 0$, there exists a finite dimensional abelian $\ast$-subalgebra $A \subset N$ that contains $D$ and for which we have
\begin{equation*}
\forall 1 \leq i \leq n, \quad \|\rE_{A' \cap M}^\varphi(x_i) - \rE_A^\varphi(x_i)\|_\varphi \leq \varepsilon.
\end{equation*}
\end{lem}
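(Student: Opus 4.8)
The plan is to recast the assertion as a minimization problem and to drive the relevant quantity to $0$ by enlarging $A$ inside $N$. Since $A\subseteq A'\cap M$, the expectation factors as $\rE_A^\varphi=\rE_A^\varphi\circ\rE_{A'\cap M}^\varphi$, so for every $x\in M$ Pythagoras gives
\[ \|\rE_{A'\cap M}^\varphi(x)-\rE_A^\varphi(x)\|_\varphi^2=\|\rE_{A'\cap M}^\varphi(x)\|_\varphi^2-\|\rE_A^\varphi(x)\|_\varphi^2 .\]
Set $\delta(A)=\sum_{i=1}^n\big(\|\rE_{A'\cap M}^\varphi(x_i)\|_\varphi^2-\|\rE_A^\varphi(x_i)\|_\varphi^2\big)\ge 0$. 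Enlarging $A$ both shrinks $A'\cap M$ and enlarges $A$, so $\delta$ is monotone decreasing along inclusions of finite-dimensional abelian subalgebras, and it suffices to show that the infimum of $\delta(A)$ over finite-dimensional abelian $A$ with $D\subseteq A\subseteq N$ is $0$. I use throughout that $N\subseteq M_\varphi$ makes $\varphi|_N$ a trace and, since $A\subseteq M_\varphi$ forces $A'\cap M$ to be $\sigma^\varphi$-invariant, guarantees the $\varphi$-preserving expectations $\rE_N^\varphi,\rE_A^\varphi,\rE_{A'\cap M}^\varphi$.

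First I would decompose $\delta$ according to $N$. As $A\subseteq N$, the expectation $\rE_N^\varphi$ is $A$-bimodular, whence $\rE_N^\varphi(A'\cap M)\subseteq A'\cap N$; therefore $\rL^2(A'\cap M)\ominus\rL^2(A'\cap N)$ is orthogonal to $\rL^2(N)$ and one may write $\delta(A)=\delta_1(A)+\delta_2(A)$, where $\delta_1(A)=\sum_i\|\rE_{A'\cap N}^\varphi(x_i)-\rE_A^\varphi(x_i)\|_\varphi^2$ is the mass lying inside $N$ and $\delta_2(A)=\sum_i\|\rE_{A'\cap M}^\varphi(x_i)-\rE_{A'\cap N}^\varphi(x_i)\|_\varphi^2$ is the mass lying outside $N$.

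To kill $\delta_1(A)$ I would average inside $P:=A'\cap N$. Here $A\subseteq\mathcal Z(P)$, and writing $p_i:=\rE_{A'\cap N}^\varphi(x_i)\in P$ one has $\rE_A^\varphi(x_i)=\rE_A^\varphi(p_i)$, so $\delta_1(A)=\sum_i\|p_i-\rE_A^\varphi(p_i)\|_\varphi^2$ measures exactly the part of the $p_i$ lying off $\mathcal Z(P)$. Fixing a MASA $A_1$ of $P$ containing $A$ and using the Dixmier-type averaging $\int_{\mathcal U(P)}\|\rE_{uA_1u^*}^\varphi(p_i)\|_\varphi^2\,du=\|\rE_A^\varphi(p_i)\|_\varphi^2+\langle\Phi(q_i),q_i\rangle$ with $q_i=p_i-\rE_A^\varphi(p_i)$ and $\Phi=\int_{\mathcal U(P)}\rE_{uA_1u^*}^\varphi(\cdot)\,du$ positive and nonzero on $\rL^2(P)\ominus\rL^2(\mathcal Z(P))$, I obtain some conjugate $uA_1u^*\subseteq P\subseteq N$ of $A$ that strictly increases the functional $A\mapsto\sum_i\|\rE_A^\varphi(x_i)\|_\varphi^2$ whenever $\delta_1(A)>0$. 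Thus enlarging $A$ toward a MASA of $N$ steadily exhausts the inside-$N$ mass.

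The genuine difficulty is $\delta_2(A)$, the part of $A'\cap M$ sitting outside $N$, and this is precisely where the hypothesis $N'\cap M\subseteq N$ must be used. As $A$ increases to a MASA $A_\infty$ of $N$ containing $D$, the algebras $A'\cap M$ decrease with intersection $A_\infty'\cap M$; the bimodularity above gives $\rE_N^\varphi(A_\infty'\cap M)=A_\infty'\cap N=A_\infty$, while $A_\infty'\supseteq N'$ yields $A_\infty'\cap M\supseteq N'\cap M$. The plan is to show, invoking $N'\cap M\subseteq N$ and the averaging of the previous paragraph to rule out any off-$N$ component surviving in the limit, that a suitably chosen maximal $A_\infty$ has $A_\infty'\cap M\subseteq N$, hence $A_\infty'\cap M=A_\infty$. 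Granting this, $\rE_{A'\cap M}^\varphi\to\rE_{A_\infty}^\varphi$ and $\rE_A^\varphi\to\rE_{A_\infty}^\varphi$ in $\|\cdot\|_\varphi$ along an increasing exhaustion of $A_\infty$ by finite-dimensional subalgebras containing $D$, so $\delta(A)\to 0$ and some finite-dimensional $A\supseteq D$ satisfies $\delta(A)\le\varepsilon^2$, which is the claim. I expect the main obstacle to be the simultaneous control of $\delta_1$ and $\delta_2$: one must enlarge $A$ far enough to force the off-$N$ relative commutant down to $N'\cap M\subseteq N$ without the absence of a uniform capture-fraction in the $\delta_1$-step stalling the process, which is why passing to a maximal $A_\infty$ and using $\rL^2$-continuity of the expectations, rather than a fixed per-step decrease, appears to be the correct way to organize the argument.
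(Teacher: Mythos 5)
Your preparatory reductions are correct: the Pythagoras identity, the monotonicity of $\delta$ under enlarging $A$, and the splitting $\delta=\delta_1+\delta_2$ (using $\rE_N^\varphi(A'\cap M)\subset A'\cap N$) all check out, and $\delta_1$ is indeed the easy half, since it is killed along any exhaustion of a MASA of $N$ containing $D$. But the proof stops exactly where the theorem begins: the step that controls $\delta_2$ is only announced (``the plan is to show \dots that a suitably chosen maximal $A_\infty$ has $A_\infty'\cap M\subseteq N$''), and the one tool you offer for it, $\Phi=\int_{\mathcal U(P)}\rE^\varphi_{uA_1u^*}(\cdot)\,du$, is not well defined: $\mathcal U(P)$ is not compact and carries no Haar probability measure, and even if $\Phi$ is read as a limit of finite averages, the asserted strict positivity of $\Phi$ on $\rL^2(P)\ominus\rL^2(\mathcal Z(P))$ is precisely the uniform ``capture'' estimate you yourself admit is missing. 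In particular, the standing hypothesis $N'\cap M\subset N$ is never actually used in any verifiable step.

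More decisively, the route you propose cannot work in the generality of the statement. Your plan needs an abelian $A_\infty\subset N$ with $A_\infty'\cap M\subset N$, which forces $A_\infty'\cap M=A_\infty'\cap N=A_\infty$, i.e.\ an abelian subalgebra of $N$ that is maximal abelian in $M$. The lemma assumes only that $M$ is $\sigma$-finite, and in that generality such an $A_\infty$ need not exist: as recalled in Section 4 of this paper (after \cite{Po81}), if $N$ is a non-Gamma type ${\rm II_1}$ factor and $M=N^\omega$, then $N\subset M=M_{\tau^\omega}$ satisfies $N'\cap M=\C 1$, yet $N$ contains no MASA of $M$ --- while the conclusion of the present lemma does hold for that inclusion. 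The exact statement is Lemma \ref{lem-technical-2}, which requires separable predual and $N'\cap M=\C 1$, and which is itself obtained (both in \cite{Po81} and in the pattern of Theorem \ref{thm-singular}, via property (P3)) by iterating the approximate statement you are trying to prove; so even in the separable case your organization is circular. This is why \cite[Theorem 2.5]{Po81} is an approximation theorem, proved by an iteration with a quantified per-step decrement. The paper itself does not reprove it: its proof of the lemma simply cuts by the minimal projections $e_j$ of $D$, applies \cite[Theorem 2.5]{Po81} to $e_jNe_j\subset e_jMe_j$ in each corner, and sets $A=\bigoplus_j A_j$.
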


\begin{proof}
Write $D = \bigoplus_{j \in J} \C e_j$ where $J$ is a nonempty finite set and $(e_j)_{j \in J}$ are the nonzero minimal projections of $D$. For every $j \in J$, we have $e_j N e_j \subset (e_j M e_j)_{\varphi_{e_j}}$ and $(e_j N e_j)' \cap e_j M e_j \subset e_j N e_j$ (see \cite[Lemma 2.1]{Po81}). 

Let $x_1, \dots, x_n \in M$ and $\varepsilon > 0$. For every $j \in J$, by \cite[Theorem 2.5]{Po81}, there exists a finite dimensional abelian $\ast$-subalgebra $A_j \subset e_j N e_j$ such that 
\begin{equation*}
\forall 1 \leq i \leq n, \quad \|\rE^{\varphi_{e_j}}_{A_j' \cap e_jMe_j}(e_j x_i e_j) - \rE^{\varphi_{e_j}}_{A_j}(e_jx_ie_j)\|_{\varphi_{e_j}} \leq \varepsilon.
\end{equation*}
Put $A = \bigoplus_{j \in J} A_j$. Then $A \subset N$ is a finite dimensional abelian $\ast$-subalgebra that contains $D$. Moreover, for all $1 \leq i \leq n$, we have
\begin{align*}
\|\rE_{A' \cap M}^\varphi(x_i) - \rE_A^\varphi(x_i)\|_\varphi^2 &= \sum_{j \in J} \varphi(e_j) \|\rE^{\varphi_{e_j}}_{A_j' \cap e_jMe_j}(e_j x_i e_j) - \rE^{\varphi_{e_j}}_{A_j}(e_jx_ie_j)\|_{\varphi_{e_j}}^2 \\
&\leq \sum_{j \in J} \varphi(e_j) \varepsilon^2 = \varepsilon^2. \qedhere
\end{align*}
\end{proof}

\begin{lem}[{\cite[Theorem 3.2]{Po81}}]\label{lem-technical-2}
Let $M$ be any factor with separable predual, $\varphi$ any normal faithful state on $M$ and $N \subset M_\varphi$ be any von Neumann subalgebra such that $N' \cap M = \C 1$.

For any finite dimensional abelian $\ast$-subalgebra $D \subset N$, there exists an abelian $\ast$-subalgebra $A \subset N$ that contains $D$ and that is maximal abelian in $M$.
\end{lem}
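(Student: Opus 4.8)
The plan is to construct $A$ as the weak closure of an increasing sequence of finite dimensional abelian subalgebras of $N$, chosen so that the two $\varphi$-preserving expectations $\rE_A^\varphi$ and $\rE_{A'\cap M}^\varphi$ are forced to coincide. The key reduction is that maximal abelianness of $A$ in $M$, i.e. $A' \cap M = A$, is equivalent to the equality of these two expectations: indeed, for $y \in A' \cap M$ one has $y = \rE_{A'\cap M}^\varphi(y)$, so if $\rE_{A'\cap M}^\varphi = \rE_A^\varphi$ then $y = \rE_A^\varphi(y) \in A$. Since $M$ has separable predual, $\rL^2(M)$ is separable and I may fix a sequence $(x_k)_{k \geq 1}$ in $M$ such that $\{x_k \xi_\varphi : k \geq 1\}$ is dense in $\rL^2(M)$. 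The hypothesis $N' \cap M = \C 1$ gives in particular $N' \cap M \subset N$, so Lemma \ref{lem-technical-1} is available for the inclusion $N \subset M$.

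For the construction I would set $A_0 = D$ and proceed inductively: given a finite dimensional abelian $\ast$-subalgebra $A_{n-1} \subset N$ containing $D$, apply Lemma \ref{lem-technical-1} to $A_{n-1}$, the elements $x_1, \dots, x_n$ and $\varepsilon = 1/n$ to obtain a finite dimensional abelian $\ast$-subalgebra $A_n \subset N$ with $A_{n-1} \subset A_n$ and
\[
\|\rE_{A_n' \cap M}^\varphi(x_i) - \rE_{A_n}^\varphi(x_i)\|_\varphi \leq \tfrac 1n, \qquad 1 \leq i \leq n.
\]
Then I put $A = \left(\bigcup_n A_n\right)\dpr$, which is an abelian $\ast$-subalgebra of $N$ containing $D$. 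Since $A \subset N \subset M_\varphi$ is pointwise fixed by $\sigma^\varphi$, both $A$ and $A' \cap M$ are globally $\sigma^\varphi$-invariant, so the $\varphi$-preserving expectations $\rE_A^\varphi$ and $\rE_{A'\cap M}^\varphi$ exist.

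The convergence step comes next. The sequence $(A_n)_n$ increases to $A$, so by the increasing martingale convergence theorem $\rE_{A_n}^\varphi(x) \to \rE_A^\varphi(x)$ in $\|\cdot\|_\varphi$ for every $x \in M$. Dually, $(A_n' \cap M)_n$ decreases, with $\bigcap_n (A_n' \cap M) = \left(\bigcup_n A_n\right)' \cap M = A' \cap M$, and the reverse martingale convergence theorem gives $\rE_{A_n'\cap M}^\varphi(x) \to \rE_{A'\cap M}^\varphi(x)$ in $\|\cdot\|_\varphi$. Fixing $i$ and letting $n \to \infty$ along $n \geq i$ in
\[
\|\rE_{A'\cap M}^\varphi(x_i) - \rE_A^\varphi(x_i)\|_\varphi \leq \|\rE_{A'\cap M}^\varphi(x_i) - \rE_{A_n'\cap M}^\varphi(x_i)\|_\varphi + \tfrac 1n + \|\rE_{A_n}^\varphi(x_i) - \rE_A^\varphi(x_i)\|_\varphi
\]
yields $\rE_{A'\cap M}^\varphi(x_i) = \rE_A^\varphi(x_i)$ for every $i$. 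As both expectations are $\|\cdot\|_\varphi$-contractive and agree on the dense set $\{x_i\}$, they coincide on all of $M$, whence $A' \cap M = A$ by the reduction above.

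I expect the main obstacle to be the reverse martingale convergence for the decreasing family $(A_n' \cap M)_n$: one must verify that the decreasing closed subspaces $\overline{(A_n' \cap M)\xi_\varphi}$ intersect in exactly $\overline{(A'\cap M)\xi_\varphi}$, equivalently that $\rE_{A_n'\cap M}^\varphi \to \rE_{A'\cap M}^\varphi$ pointwise. This is precisely where the global $\sigma^\varphi$-invariance of each relative commutant (inherited from $A_n \subset M_\varphi$) is essential, since it produces the $\varphi$-preserving expectations and lets the whole argument be run inside $\rL^2(M)$ through the associated Jones projections; the increasing case is the comparatively routine counterpart.
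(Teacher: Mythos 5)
Your proposal is correct in outline but takes a genuinely different route from the paper. The paper proves Lemma \ref{lem-technical-2} by a short reduction to the cited result: writing $D = \bigoplus_{j \in J} \C e_j$ over the minimal projections of $D$, it uses \cite[Lemma 2.1]{Po81} to check that each corner inclusion $e_j N e_j \subset e_j M e_j$ again satisfies the hypotheses, applies \cite[Theorem 3.2]{Po81} in each corner to get $A_j \subset e_j N e_j$ maximal abelian in $e_j M e_j$, and sets $A = \bigoplus_j A_j$ (any $x$ commuting with $A$ commutes with each $e_j$, so $x = \sum_j e_j x e_j$ with $e_j x e_j \in A_j' \cap e_j M e_j = A_j$). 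You instead reconstruct the content of \cite[Theorem 3.2]{Po81} from scratch by iterating Lemma \ref{lem-technical-1}; this is legitimate, it handles the constraint $D \subset A$ automatically by starting the induction at $A_0 = D$, and it is in fact the same scheme the paper itself runs in the proof of Theorem \ref{thm-singular}, where property (P3) together with \cite[Lemma 1.2]{Po81} yields $A' \cap M = A$ in the limit. What the paper's argument buys is brevity (given the cited theorem); what yours buys is self-containedness modulo Lemma \ref{lem-technical-1}.

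Two remarks on the step you flag as the main obstacle. First, the decreasing martingale convergence you invoke is indeed true here: since each $A_n' \cap M$ is globally $\sigma^\varphi$-invariant, any weak$^*$ cluster point $y$ of the bounded sequence $\bigl(\rE^\varphi_{A_n' \cap M}(x)\bigr)_n$ lies in $\bigcap_n (A_n' \cap M) = A' \cap M$ (the tail from index $m$ on lies in the weak$^*$ closed algebra $A_m' \cap M$), while $\rE^\varphi_{A_n' \cap M}(x)\xi_\varphi = P_n x \xi_\varphi$ converges in norm, $P_n$ being the decreasing Jones projections; hence $P_\infty x \xi_\varphi = y \xi_\varphi \in \overline{(A' \cap M)\xi_\varphi}$ for every $x \in M$, which forces $P_\infty = e_{A' \cap M}$. (This is Tsukada's martingale convergence theorem for state-preserving expectations; it is not in the paper's bibliography, so you would need to cite or prove it.) Second, and more efficiently, you can bypass the reverse martingale theorem altogether: if $y \in A' \cap M$, then $\rE^\varphi_{A_n' \cap M}(y) = y$ for all $n$, so for $n \geq i$,
\[
\|y - \rE^\varphi_{A_n}(x_i)\|_\varphi \leq \|\rE^\varphi_{A_n' \cap M}(y - x_i)\|_\varphi + \|\rE^\varphi_{A_n' \cap M}(x_i) - \rE^\varphi_{A_n}(x_i)\|_\varphi \leq \|y - x_i\|_\varphi + \tfrac{1}{n},
\]
and since $\rE^\varphi_{A_n}(x_i) \in A$, letting $n \to \infty$ and using density of $(x_i \xi_\varphi)_i$ gives $y \xi_\varphi \in \overline{A \xi_\varphi}$, whence $y = \rE^\varphi_A(y) \in A$ because $\xi_\varphi$ is separating. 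This is essentially \cite[Lemma 1.2]{Po81}, and with it your construction closes using only the trivially verified increasing half of the convergence.
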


\begin{proof}
Write $D = \bigoplus_{j \in J} \C e_j$ where $J$ is a nonempty finite set and $(e_j)_{j \in J}$ are the nonzero minimal projections of $D$. For every $j \in J$, we have $e_j N e_j \subset (e_j M e_j)_{\varphi_{e_j}}$ and $(e_j N e_j)' \cap e_j M e_j = \C e_j$ (see \cite[Lemma 2.1]{Po81}). For every $j \in J$, by \cite[Theorem 3.2]{Po81}, there exists an abelian $\ast$-subalgebra $A_j \subset e_j N e_j$ that is maximal abelian in $e_j M e_j$. Put $A = \bigoplus_{j \in J} A_j$.  Then $A \subset N$ is an abelian $\ast$-subalgebra that contains $D$ and that is maximal abelian in $M$.
\end{proof}

\begin{thm}\label{thm-singular}
Let $M$ be any non-type ${\rm I}$ factor with separable predual, $\varphi$ any normal faithful state on $M$ and $N \subset M_\varphi$ any subalgebra such that $N' \cap M = \C 1$.

Then there exists an abelian $\ast$-subalgebra $A \subset N$ that is maximal abelian and singular in $M$.
\end{thm}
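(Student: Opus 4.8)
The plan is to construct the singular MASA $A$ as an increasing union of finite dimensional abelian $\ast$-subalgebras of $N$, built by a diagonal/exhaustion argument that simultaneously achieves maximal abelianness (via Lemma \ref{lem-technical-2}) and singularity. The key structural observation is that singularity of $A$ in $M$ should follow once we arrange, for a norm-dense sequence $(x_i)_i$ in the unit ball of $M$, that the ``diagonal'' part $\rE_A^\varphi(x_i)$ and the ``commutant'' part $\rE_{A' \cap M}^\varphi(x_i)$ become arbitrarily close. Lemma \ref{lem-technical-1} is precisely the engine that lets us push these two expectations together on finitely many elements at a time, while Lemma \ref{lem-technical-2} guarantees that the limiting algebra is genuinely maximal abelian. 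So the first step is to fix a sequence $(x_i)_{i \geq 1}$ that is $\|\cdot\|_\infty$-dense in $\Ball(M)$ (possible since $M$ has separable predual).

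\emph{Construction.} I would build an increasing sequence $D_1 \subset D_2 \subset \cdots$ of finite dimensional abelian $\ast$-subalgebras of $N$ as follows. Having constructed $D_k$, apply Lemma \ref{lem-technical-1} with the tolerance $\varepsilon = 2^{-k}$ and the finite list $x_1, \dots, x_k$ to obtain a finite dimensional abelian $D_{k+1} \supset D_k$ in $N$ satisfying
\begin{equation*}
\forall 1 \leq i \leq k, \quad \|\rE_{D_{k+1}' \cap M}^\varphi(x_i) - \rE_{D_{k+1}}^\varphi(x_i)\|_\varphi \leq 2^{-k}.
\end{equation*}
Then let $A_0 = \overline{\bigcup_k D_k}^{\text{w}}$ be the weak closure, an abelian $\ast$-subalgebra of $N$. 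The hypothesis $N' \cap M = \C 1$ passes to $A_0$ in the weak sense needed, but $A_0$ need not yet be maximal abelian; to fix this I would interleave one application of Lemma \ref{lem-technical-2} at the end (or enlarge $A_0$ inside $N$ to a maximal abelian subalgebra of $M$ using that lemma, starting from $A_0$ as the seed $D$). Since $N' \cap M = \C 1$, Lemma \ref{lem-technical-2} produces a maximal abelian $A \subset N$ containing $A_0$; one checks the density estimates above survive the enlargement because $\rE_A^\varphi = \rE_A^\varphi \circ \rE_{A_0' \cap M}^\varphi$ and $A \subset A_0' \cap M$, so the approximations persist in the limit $A$.

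\emph{Singularity.} The heart of the argument, and the step I expect to be the main obstacle, is deducing $\mathcal N_M(A) = \mathcal U(A)$ from the asymptotic coincidence of $\rE_A^\varphi$ and $\rE_{A'\cap M}^\varphi$ on a dense set. Take $u \in \mathcal N_M(A)$; then $u A u^* = A$, so conjugation by $u$ implements an automorphism of $A$, and for every $a \in A$ we have $u a u^* \in A \subset A' \cap M$. The idea is that normalizing unitaries interact rigidly with the two conditional expectations: for any $a \in A$ one has $\rE_A^\varphi(u^* a u) = \rE_{A' \cap M}^\varphi(u^* a u)$ forced in the limit, and since $A = A' \cap M$ (maximality), the difference $\rE_{A'\cap M}^\varphi - \rE_A^\varphi$ vanishes on $A' \cap M = A$ itself. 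The genuine content is to show that a nontrivial normalizer would produce an element on which these two expectations \emph{must} differ by a definite amount, contradicting the $2^{-k}$ estimates; concretely, I would test the estimate against $u$ itself (approximated by some $x_i$) and exploit that $\rE_A^\varphi(u)$ is supported on the fixed points of the $A$-automorphism induced by $u$, whereas $\rE_{A'\cap M}^\varphi(u) = u$ when $u$ normalizes $A$ and lies outside $\mathcal U(A)$ only if this automorphism is nontrivial. Making this dichotomy quantitative so that the limiting equality $\rE_A^\varphi = \rE_{A'\cap M}^\varphi$ on $\mathcal N_M(A)$ collapses the normalizer onto $\mathcal U(A)$ is the crux; this is exactly the mechanism of \cite[Theorem 2.1]{Po16} in the ${\rm II_1}$ setting, and the delicate part here is verifying that the $\varphi$-preserving expectations and the modular-theoretic bookkeeping (all subalgebras sitting inside $M_\varphi$) behave well enough for the type ${\rm III_1}$ adaptation to go through verbatim.
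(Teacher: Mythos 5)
Your construction produces a MASA, but the mechanism you propose for singularity cannot work, and this is precisely where the real content of the theorem lies. The condition you iterate — that $\rE^\varphi_{D_k'\cap M}$ and $\rE^\varphi_{D_k}$ become close on a dense sequence — is, in the limit, \emph{equivalent} to $A' \cap M = A$ (this is \cite[Lemma 1.2]{Po81}, and it is how the paper gets maximal abelianness). It carries no information beyond maximal abelianness: once $A$ is a MASA, $A' \cap M = A$, so $\rE^\varphi_A = \rE^\varphi_{A'\cap M}$ \emph{identically} on all of $M$, and in particular on any normalizer $u$. Your proposed dichotomy collapses at exactly this point: your claim that ``$\rE^\varphi_{A'\cap M}(u) = u$ when $u$ normalizes $A$'' is false unless $u \in A' \cap M = A$; for any MASA, singular or not, the two expectations agree on every element of $M$. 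Since non-singular MASAs (e.g.\ Cartan subalgebras) abound and satisfy your estimates just as well, no argument that uses only the (P3)-type estimates can detect normalizers. This is why deferring the crux to ``the mechanism of \cite[Theorem 2.1]{Po16}'' is not a completion of the proof: that mechanism is an additional, active ingredient of the construction, not a consequence of the approximations you set up.

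Concretely, the paper's proof builds in two further data that your construction omits. First, a sequence of projections $e_n$ strongly dense in the projections of $N$ (each repeated infinitely often), together with corrected projections $f_n \in A_n$ close to $e_n$ (property (P1)); these are needed so that a hypothetical normalizer $u$ with $uzu^*\perp z$ for some projection $z \in A$ can be confronted with projections from the construction. Second, and crucially, \emph{mixing} unitaries $v_n \in \mathcal U(A_n f_n)$, obtained from \cite[Theorem 2.3]{HV12} (a Popa intertwining-type theorem, exploiting that $(A_n f_{n+1})' \cap f_{n+1} N f_{n+1}$ is type ${\rm II_1}$ while $Bf_{n+1}^\perp$ is abelian with normal expectation), which satisfy (P2): $\|\rE^\varphi_{A_n'\cap M}(x_i^* v_n x_j)f_n^\perp\|_\varphi \leq 2^{-n}$. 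A normalizer $u$ with $uzu^* \perp z$ forces this quantity to be bounded \emph{below} by roughly $\|pz\|_\varphi/\kappa_1$ (since $uzv_npu^* \in z^\perp(A_n'\cap M)z^\perp$), yielding the contradiction. Finally, there is a genuinely type ${\rm III}$ issue you gesture at but do not resolve: $u$ need not lie in $M_\varphi$, so one cannot naively approximate $u$ in $\|\cdot\|_\varphi$ and move it past the expectations; the paper handles this via Connes' relation $\sigma^\varphi_t(u) = uh^{{\rm i}t}$ with $h$ affiliated with $A$, cutting by a spectral projection $p$ of $h$ to make $up$ and $(up)^*$ entire analytic with bounds $\kappa_1, \kappa_2$. (A minor point besides: a sequence cannot be $\|\cdot\|_\infty$-dense in $\Ball(M)$ when $M$ is infinite dimensional; the dense sequence must be taken in the $\ast$-strong topology.)
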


\begin{proof}
We follow the lines of the proof of \cite[Theorem 2.1]{Po16}. Choose a sequence $x_n \in \Ball(M)$ that is $\ast$-strongly dense in $\Ball(M)$ and a sequence of projections $e_n \in N$ that is strongly dense in the set of all projections of $N$ with $e_0 = 1$. We may further assume that each projection $e_n$ appears infinitely many times in the sequence $(e_m)_{m \in \N}$.

We construct inductively an increasing sequence $A_n$ of finite dimensional abelian $\ast$-subalgebras of $N$ together with a sequence of projections $f_n \in A_n$ and a sequence of unitaries $v_n \in \mathcal U(A_n f_n)$ satisfying the following properties:

\begin{itemize}

\item [(P1)] $\|f_n - e_n\|_\varphi \leq 7 \|e_n - \rE^\varphi_{A_{n - 1}' \cap N}(e_n)\|_\varphi$;

\item [(P2)] $\|\rE^\varphi_{A_n' \cap M}(x_i^*v_n x_j)f_n^\perp\|_\varphi \leq 2^{-n}$ for all $0 \leq i, j \leq n$;

\item [(P3)] $\|\rE^\varphi_{A_n' \cap M}(x_j) - \rE^\varphi_{A_n}(x_j)\|_\varphi \leq 2^{-n}$ for all $0 \leq j \leq n$.

\end{itemize}

We put $A_{-1} = A_0 = \C 1$, $f_0 =  v_0 = 1$. Assume that we have constructed $(A_k, f_k, v_k)$ for all $0 \leq k \leq n$. Put $f_{n + 1} := \mathbf 1_{[1/2, 1]}(\rE^\varphi_{A_n'\cap N}(e_{n + 1}))$. Then $f_{n + 1} \in A_n' \cap N$ is a projection that satisfies $\|f_{n + 1} - e_{n + 1}\|_\varphi \leq 7 \|e_{n + 1} - \rE^\varphi_{A_n' \cap N}(e_{n + 1})\|_\varphi$ by \cite[Lemma 1.4]{Po82}. Then (P1) holds true for $f_{n + 1}$. 

Assume that $f_{n + 1} \in \{ 0, 1 \}$. Then (P2) holds true for any choice of $A_{n + 1}$. By Lemma \ref{lem-technical-1}, we may find a finite dimensional abelian $\ast$-subalgebra $A_{n + 1} \subset N$ that contains $A_n$ and that satisfies 
\begin{equation}\label{eq-MASA}
\forall 0 \leq i \leq n + 1, \quad \|\rE^\varphi_{A_{n + 1}'\cap M }(x_i) - \rE^\varphi_{A_{n + 1}}(x_i) \|_\varphi \leq 2^{-(n + 1)}.
\end{equation}
Thus, \eqref{eq-MASA} shows that (P3) holds true for $A_{n + 1}$.

Assume that $f_{n + 1} \not\in \{ 0, 1 \}$. By Lemma \ref{lem-technical-2}, there exists an abelian $\ast$-subalgebra $B \subset  N$ that contains $A_n \vee \C f_{n + 1}$ and that is maximal abelian in $M$. Since $(A_n f_{n + 1})' \cap  f_{n + 1} N f_{n + 1}$ is a type ${\rm II_1}$ von Neumann algebra and $B f_{n + 1}^\perp \subset  f_{n + 1}^\perp M f_{n + 1}^\perp$ is an abelian subalgebra with normal expectation, \cite[Theorem 2.3]{HV12} (see also \cite[Theorem 2.1 and Corollary 2.3]{Po03}) implies that there exists $v_{n + 1} \in \mathcal U((A_n f_{n + 1})' \cap  f_{n + 1} N f_{n + 1})$ for which  we have
\begin{equation}\label{eq-intertwining-1}
\forall 0 \leq i, j \leq n + 1, \quad \left\|\rE^{\varphi_{f_{n + 1}^\perp}}_{B f_{n + 1}^\perp}(f_{n + 1}^\perp  x_i^*f_{n + 1} \, v_{n + 1} \, f_{n + 1}  x_j   f_{n + 1}^\perp) \right\|_\varphi < 2^{-(n + 2)}.
\end{equation}
Using the spectral theorem, we may further assume that $v_{n + 1}  \in \mathcal U((A_n f_{n + 1})' \cap  f_{n + 1} N f_{n + 1})$ has finite spectrum and still satisfies \eqref{eq-intertwining-1}. We may then choose a finite dimensional abelian $\ast$-subalgebra $D_1 \subset Bf_{n + 1}$ that contains $A_n f_{n + 1}$ and $v_{n + 1}$. Moreover, using \cite[Lemma 1.2]{Po81} and \eqref{eq-intertwining-1}, we may choose a finite dimensional abelian $\ast$-subalgebra $D_2 \subset Bf_{n + 1}^\perp$ that contains $A_n f_{n + 1}^\perp$ and for which we have
\begin{equation}\label{eq-intertwining-2}
\forall 0 \leq i, j \leq n + 1, \quad \left\|\rE^{\varphi_{f_{n + 1}^\perp}}_{D_2' \cap f_{n + 1}^\perp M f_{n + 1}^\perp}(f_{n + 1}^\perp  x_i^*f_{n + 1} \, v_{n + 1} \, f_{n + 1} x_j   f_{n + 1}^\perp) \right\|_{\varphi} < 2^{-(n + 1)}.
\end{equation}
Letting $D := D_1 \oplus D_2$, we can then rewrite \eqref{eq-intertwining-2} as
\begin{equation}\label{eq-intertwining-3}
\forall 0 \leq i, j \leq n + 1, \quad \|\rE^\varphi_{D' \cap M}(f_{n + 1}^\perp  x_i^*f_{n + 1} \, v_{n + 1} \, f_{n + 1} x_j   f_{n + 1}^\perp)\|_\varphi < 2^{-(n + 1)}.
\end{equation}
By Lemma \ref{lem-technical-1}, we may find a finite dimensional abelian $\ast$-subalgebra $A_{n + 1} \subset N$ that contains $D$ (and hence that contains $A_n$) and that satisfies \eqref{eq-MASA}. Thus, \eqref{eq-MASA} shows that (P3) holds true for $A_{n + 1}$. Since $D \subset A_{n + 1}$, we have $A_{n + 1}' \cap M \subset D' \cap M$ and \eqref{eq-intertwining-3} implies that (P2) holds true for $f_{n + 1}$ and $A_{n + 1}$. Thus, we have constructed $(A_{n + 1}, f_{n + 1}, v_{n + 1})$.

Put $A = \bigvee_{n \in \N} A_n$. Property (P3) and \cite[Lemma 1.2]{Po81} imply that $A' \cap M = A$ and hence $A$ is maximal abelian in $M$. It remains to prove that $A$ is singular in $M$. By contradiction, assume that $A \neq \mathcal N_M(A)\dpr$. Choose $u \in \mathcal N_M(A) \setminus \mathcal U(A)$. We can then find a nonzero projection $z \in A$ such that $uzu^* \perp z$. Denote by $h$ the unique nonsingular (possibly) unbounded positive selfadjoint operator affiliated with $A$ such that $(\varphi \circ \Ad(u))|_A = \varphi(h \, \cdot \,) |_A$. By \cite[Lemme 1.4.5(2)]{Co72}, we have $\sigma_t^\varphi (u) = u h^{{\rm i}t} $ for every $t \in \R$. Since $h$ is nonsingular, there exists a projection $p \in A$ large enough so that $pz \neq 0$ and $\delta > 0$ so that $\delta p \leq hp \leq \delta^{-1} p$. It follows that the nonzero partial isometry $up \in M$ (resp.\ $(up)^* \in M$) is entire analytic with respect to the modular automorphism group $\sigma^\varphi$. Hence, there exists  $\kappa_1 \geq 1$ (resp.\ $\kappa_2 \geq 1$) such that $\|x \, up\|_\varphi \leq \kappa_1 \|x\|_\varphi$ (resp.\ $\|x \, (up)^*\|_\varphi \leq \kappa_2 \|x\|_\varphi$) for every $x \in M$.

Put $q := pz \in A$. For every $n \in \N$, we have
\begin{align}\label{eq-inequality-1}
\|q f_n\|_\varphi &= \|z v_n p\|_\varphi \\ \nonumber
&= \| u^* \, uz v_n pu^* \, up\|_\varphi \\ \nonumber
& \leq \kappa_1 \| uz v_n p u^* \|_\varphi \\ \nonumber
&= \kappa_1 \| \rE^\varphi_{A_n' \cap M}(uz v_npu^*) z^\perp \|_\varphi \quad (\text{since } uz v_npu^* \in z^\perp(A_n'\cap M)z^\perp).
\end{align}
Since $z, v_n \in A \subset M_\varphi$ and since $(up)^*$ is entire analytic with respect to the modular automorphism group $\sigma^\varphi$, for all $0 \leq i, j \leq n$, we further have
\begin{align}\label{eq-inequality-2}
\| \rE^\varphi_{A_n' \cap M}(uz \, v_n \, pu^*) z^\perp \|_\varphi 
&\leq \| \rE^\varphi_{A_n' \cap M}(x_i^* \, v_n \, pu^*) z^\perp \|_\varphi + \kappa_2 \|x_i^* - uz\|_\varphi  \\ \nonumber
&\leq \| \rE^\varphi_{A_n' \cap M}(x_i^* \, v_n \, x_j) z^\perp \|_\varphi + \|x_j - pu^*\|_\varphi + \kappa_2 \|x_i^* - uz\|_\varphi \\ \nonumber
&\leq \| \rE^\varphi_{A_n' \cap M}(x_i^* \, v_n \, x_j) e_n^\perp \|_\varphi + \| e_n - z\|_\varphi + \|x_j - pu^*\|_\varphi + \kappa_2 \|x_i^* - uz\|_\varphi.
\end{align}
Since $z \in A \subset A_{n - 1}'\cap N$, for every $n \in \N$, using (P1) we have
\begin{align*}
\|e_n - f_n\|_\varphi &\leq 7 \|e_n - \rE^\varphi_{A_{n - 1}' \cap N}(e_n)\|_\varphi \\
&=  7 \|e_n - z - \rE^\varphi_{A_{n - 1}' \cap N}(e_n - z)\|_\varphi \\
& \leq 7 \|e_n - z\|_\varphi.
\end{align*}
Choose now $n_0 \in \N$ large enough so that $2^{-n_0} \leq \|q\|_\varphi/(100 \kappa_1)$. By density, we may then choose $i, j \geq 0$ and $n \geq \max(i, j , n_0)$ so that $\| e_n - z\|_\varphi + \|x_j - pu^*\|_\varphi + \kappa_2 \|x_i^* - uz\|_\varphi \leq \|q\|_\varphi/(100 \kappa_1)$. Using \eqref{eq-inequality-2} and (P2), we then have
\begin{align}\label{eq-inequality-3}
\| \rE^\varphi_{A_n' \cap M}(uz v_npu^*) z^\perp \|_\varphi &\leq \| \rE^\varphi_{A_n' \cap M}(x_i^* \, v_n \, x_j) e_n^\perp \|_\varphi + \|q\|_\varphi/(100 \kappa_1) \\ \nonumber
&\leq \| \rE^\varphi_{A_n' \cap M}(x_i^* \, v_n \, x_j) f_n^\perp \|_\varphi + \|f_n - e_n\|_\varphi + \|q\|_\varphi/(100 \kappa_1) \\ \nonumber
&\leq \| \rE^\varphi_{A_n' \cap M}(x_i^* \, v_n \, x_j) f_n^\perp \|_\varphi + 7\| e_n - z\|_\varphi + \|q\|_\varphi/(100 \kappa_1) \\ \nonumber
&\leq 2^{-n}+ 7 \|q\|_\varphi/(100 \kappa_1)  + \|q\|_\varphi/(100 \kappa_1) \\ \nonumber
&\leq 9 \|q\|_\varphi/(100\kappa_1) \\ \nonumber
& \leq  \|q\|_\varphi/(2 \kappa_1). \nonumber
\end{align}
Combining \eqref{eq-inequality-1} and \eqref{eq-inequality-3}, we obtain
\begin{equation}\label{eq-conclusion-1}
\|q f_n\|_\varphi \leq \|q\|_\varphi/2.
\end{equation}
On the other hand, we have
\begin{align}\label{eq-conclusion-2}
\|q f_n - q\|_\varphi = \|q(f_n - z)\|_\varphi 
&\leq \|f_n - z\|_\varphi \\ \nonumber
&\leq \|f_n - e_n\|_\varphi + \|e_n - z\|_\varphi \\ \nonumber
& \leq 8 \|e_n - z\|_\varphi \\ \nonumber
& \leq 8 \|q\|_\varphi/(100\kappa_1) \\ \nonumber
&\leq \|q\|_\varphi/4. \nonumber
\end{align}
Combining \eqref{eq-conclusion-1} and \eqref{eq-conclusion-2}, we finally obtain
\begin{equation*}
\|q\|_\varphi/2 \geq \|q f_n\|_\varphi \geq \|q\|_\varphi - \|qf_n - q\|_\varphi \geq 3 \|q\|_\varphi/4.
\end{equation*}
Since $\|q\|_\varphi \neq 0$, this is contradiction. Therefore $A = \mathcal N_M(A)\dpr$ and hence $A \subset M$ is singular.
\end{proof}

\begin{cor}
Every type ${\rm III_1}$ factor $M$ with separable predual satisfying {\rm CBP} has a singular maximal abelian $\ast$-subalgebra $A \subset M$ with normal expectation.
\end{cor}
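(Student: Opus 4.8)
The plan is to derive this corollary as a direct consequence of Theorem~\ref{thm-singular}, using Haagerup's characterization of CBP to supply a state whose centralizer has trivial relative commutant, and then using Takesaki's theorem to produce the conditional expectation. Essentially all the real content is already contained in Theorem~\ref{thm-singular}, so the corollary amounts to assembling the pieces.

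First I would apply Haagerup's characterization \cite[Theorem 3.1]{Ha85}: since $M$ is a type ${\rm III_1}$ factor with separable predual satisfying CBP, there exists a normal faithful state $\varphi$ on $M$ with $(M_\varphi)' \cap M = \C 1$. I would then take $N = M_\varphi$ and observe that $N \subset M_\varphi$ holds trivially, while $N' \cap M = (M_\varphi)' \cap M = \C 1$ by the choice of $\varphi$. Since a type ${\rm III_1}$ factor is not of type ${\rm I}$, the hypotheses of Theorem~\ref{thm-singular} are satisfied for this $N$ and $\varphi$.

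Next I would invoke Theorem~\ref{thm-singular} to obtain an abelian $\ast$-subalgebra $A \subset N = M_\varphi$ that is maximal abelian and singular in $M$. The only remaining point is the existence of a normal conditional expectation onto $A$. Because $A \subset M_\varphi$, the modular automorphism group $\sigma^\varphi$ fixes $A$ pointwise and in particular leaves $A$ globally invariant; hence by \cite[Theorem IX.4.2]{Ta03} (recalled in the Notation section) there is a $\varphi$-preserving normal faithful conditional expectation $\rE_A : M \to A$. This exhibits $A$ as a singular MASA with normal expectation, as required.

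I do not expect any genuine obstacle here: the delicate inductive construction and the intertwining argument have all been carried out in the proof of Theorem~\ref{thm-singular}, and what remains is only to check that the centralizer $M_\varphi$ supplied by CBP meets the relative-commutant hypothesis and that membership in $M_\varphi$ automatically yields the modular-invariant expectation via Takesaki's theorem.
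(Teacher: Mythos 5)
Your proposal is correct and follows essentially the same route as the paper: Haagerup's characterization \cite[Theorem 3.1]{Ha85} supplies a normal faithful state $\varphi$ with $(M_\varphi)' \cap M = \C 1$, Theorem \ref{thm-singular} applied with $N = M_\varphi$ yields the singular MASA $A \subset M_\varphi$, and the normal expectation comes from $\sigma^\varphi$-invariance of $A$ via \cite[Theorem IX.4.2]{Ta03}. The paper leaves the last step implicit, but your explicit justification is exactly the intended one.
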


\begin{proof}
By \cite[Theorem 3.1]{Ha85}, there exists a faithful state $\varphi \in M_\ast$ such that $(M_\varphi)' \cap M = \C 1$. By Theorem \ref{thm-singular}, there exists an abelian $\ast$-subalgebra $A \subset M_\varphi$ that is maximal abelian and singular in $M$. Moreover, $A \subset M$ is the range of a normal faithful conditional expectation.
\end{proof}


\section{Stability of CBP under finite index extensions/restrictions}\label{section:CBP-stability}

In this section we investigate the stability properties of CBP for inclusions of type ${\rm III_1}$ factors $N \subset M$ with normal faithful conditional expectation $\rE_N : M \to N$. Fix a normal faithful state $\varphi$ on $M$ such that $\varphi = \varphi \circ \rE_N$. The bicentralizer algebras $\rB(N, \varphi)$ and $\rB(M, \varphi)$ are not related in any obvious way and so it is hopeless to try to prove in general that CBP passes to subalgebras or overalgebras. In fact, any type ${\rm III_1}$ factor $N$ embeds in an irreducible way and with NCE into a type ${\rm III_1}$ factor $M$ that satisfies CBP. Indeed, choose any normal faithful state $\psi_N$ on $N$ and put $(M, \psi_M) = (N, \psi_N) \ast (\rL(\Z_2), \tau_{\Z_2})$. It follows from \cite[Theorem A.1]{HU15} that $M$ is a type ${\rm III_1}$ factor that satisfies CBP,  $N \subset M$ is with NCE and $N' \cap M = \C 1$.

However, when the inclusion $N\subset M$ has finite index, we show here that $N$ satisfies CBP if and only if $M$ satisfies CBP

\begin{thm}\label{thm-CBP-finite-index}
Let $N \subset M$ be any inclusion of type ${\rm III_1}$ factors with separable predual such that $N$ is the range of a normal faithful conditional expectation and $N$ has finite index in $M$. 

Then $N$ satisfies {\rm CBP} if and only if $M$ satisfies {\rm CBP}.
\end{thm}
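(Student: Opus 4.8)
The plan is to prove the equivalence by relating the bicentralizer algebras $\rB(N,\varphi)$ and $\rB(M,\varphi)$ through the finite-index structure, using the state $\varphi = \varphi\circ\rE_N$ as a common reference. The key observation is that finite index gives us strong control: there is a quasi-basis (a finite Pimsner--Popa basis) $(m_1,\dots,m_k)$ for the inclusion $N\subset M$ with $\rE_N$, and the index element is bounded. The central technical point I would establish first is that the asymptotically centralizing sequences $\AC(N,\varphi)$ and $\AC(M,\varphi)$ are compatible: any sequence $(x_n)_n\in\AC(N,\varphi)$ sitting in $N$ is automatically in $\AC(M,\varphi)$ when viewed in $M$ (since the $\varphi$-commutation condition is inherited), and conversely, by averaging an element of $\AC(M,\varphi)$ against the quasi-basis or by applying $\rE_N$, one can transfer asymptotic centrality back into $N$. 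Here the finite-index hypothesis is what makes $\rE_N$ and its companion maps norm-bounded on the relevant $\rL^2$-spaces.

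Concretely, I would argue both implications via Haagerup's characterization already recalled in the excerpt: $M$ (resp.\ $N$) satisfies CBP iff it admits a normal faithful state with trivial centralizer relative commutant. For the direction ``$N$ satisfies CBP $\Rightarrow$ $M$ satisfies CBP,'' I would show that if $a\in\rB(M,\varphi)$, then $\rE_N(a)\in\rB(N,\varphi)=\C 1$, and moreover that $a$ is determined by its behavior against a basis consisting of elements of $N$ together with the finitely many quasi-basis vectors; the finite index forces $\rB(M,\varphi)$ to be finite-dimensional over $\rB(N,\varphi)$, and a factoriality/triviality argument then collapses it to $\C 1$. For the reverse direction, given $b\in\rB(N,\varphi)$, I would view the inclusion from the other side using the basic construction / conditional expectation $\rE_M$ from $M$ onto $N$ and a dual expectation, exploiting that a finite-index inclusion of factors is ``symmetric enough'' that asymptotic centralizers transfer in both directions.

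The main obstacle I anticipate is the bidirectional transfer of asymptotically centralizing sequences, because $\AC(N,\varphi)$ and $\AC(M,\varphi)$ are defined by the \emph{norm} condition $\|x_n\varphi - \varphi x_n\|\to 0$ on the predual, not by an $\rL^2$-condition, and conditional expectations do not in general interact cleanly with the predual norm. To handle this I would use that $\varphi=\varphi\circ\rE_N$ guarantees $\rE_N$ is $\varphi$-preserving and $\sigma^\varphi$-equivariant (so $\sigma^\varphi$ restricts to $N$), and that finite index yields an $M$-bimodular, $\varphi$-preserving, completely bounded map in the other direction; combining these should let me estimate $\|\rE_N(x_n)\varphi-\varphi\,\rE_N(x_n)\|$ in terms of $\|x_n\varphi-\varphi x_n\|$ up to a constant depending only on the index.

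A cleaner alternative, which I would pursue if the predual-norm estimates prove delicate, is to avoid transferring sequences directly and instead work entirely inside the bicentralizer algebras using their known structural properties. Since $\rB(M,\varphi)$ is globally $\sigma^\varphi$-invariant and since $N\subset M$ is $\sigma^\varphi$-invariant with finite index, I would try to show $\rB(N,\varphi)=\rB(M,\varphi)\cap N$ and that $\rB(M,\varphi)$ is contained in (a finite-index extension of) $\rB(N,\varphi)$; triviality of one then forces the other to be a finite-dimensional factor, hence $\C 1$. The crux in either approach is the same finite-index compatibility of the modular data, so I would isolate that as a preliminary lemma before splitting into the two implications.
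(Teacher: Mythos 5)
Your upward implication ($N$ satisfies CBP $\Rightarrow$ $M$ satisfies CBP) can indeed be made to work along the lines you sketch, and by a route genuinely different from the paper's. The transfer is available in that direction: since $\varphi = \varphi \circ \rE_N$, for $x \in N$ and $y \in M$ one has $(x\varphi - \varphi x)(y) = (x\varphi|_N - \varphi|_N x)(\rE_N(y))$, whence $\AC(N, \varphi|_N) \subset \AC(M, \varphi)$; then $N$-bimodularity of $\rE_N$ and the Kadison--Schwarz inequality give $\rE_N(\rB(M,\varphi)) \subset \rB(N,\varphi|_N) = \C 1$, i.e.\ $\rE_N(a) = \varphi(a)1$ for all $a \in \rB(M,\varphi)$. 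Combined with the Pimsner--Popa inequality $\rE_N \geq \kappa\,\id$ of \cite{PP84}, every nonzero projection $p \in \rB(M,\varphi)$ satisfies $\varphi(p)1 = \rE_N(p) \geq \kappa p$, hence $\varphi(p) \geq \kappa$, so $\rB(M,\varphi)$ cannot be diffuse. (Note this is not quite your stated mechanism: the quasi-basis expansion $a = \sum_i m_i \rE_N(m_i^* a)$ has coefficients in $N$, not in $\rB(N,\varphi)$, so it does not exhibit $\rB(M,\varphi)$ as a finite module over $\rB(N,\varphi)$.) But your final step is a real gap: $\rB(M,\varphi)$ is not known a priori to be a factor, so no ``factoriality/triviality argument'' excludes, say, $\rB(M,\varphi) \cong \C^2$. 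What closes it is the dichotomy theorem \cite[Theorem 3.5]{HI15} --- the bicentralizer of a type ${\rm III_1}$ factor with separable predual is either $\C 1$ or a type ${\rm III_1}$ factor, in particular diffuse --- which is exactly the ingredient the paper invokes at the corresponding point of its own (basic-construction) argument for this direction.

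The downward implication is where the proposal breaks. The transfer of centralizing sequences is intrinsically one-directional: $\rE_N$ carries $\AC(M,\varphi)$ into $\AC(N,\varphi|_N)$ and $\rB(M,\varphi)$ into $\rB(N,\varphi|_N)$, but nothing carries $\rB(N,\varphi|_N)$ into $\rB(M,\varphi)$. Given $b \in \rB(N,\varphi|_N)$ and $(x_n)_n \in \AC(M,\varphi)$, asymptotic commutation of $b$ with $\rE_N(x_n)$ gives no control whatsoever on $x_n b - b x_n$, because $x_n - \rE_N(x_n)$ is not small in any norm; this is precisely the asymmetry the paper warns about when it says the two bicentralizers ``are not related in any obvious way.'' For the same reason your fallback identity $\rB(N,\varphi) = \rB(M,\varphi) \cap N$ is not a lemma but the theorem itself: the inclusion $\rB(M,\varphi) \cap N \subset \rB(N,\varphi)$ is the easy one, and when $M$ satisfies CBP the left-hand side is $\C 1$, so proving equality is proving CBP for $N$. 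Your basic-construction remark could be turned into a correct proof, but only via an ingredient you never state: for type ${\rm III}$ factors with separable predual one has $\langle M, N\rangle \cong N$ (the basic construction is the commutant of a faithful normal representation of $N\op$ on a separable Hilbert space, and all such representations are unitarily equivalent), so that the upward implication applied to the finite-index inclusion $M \subset \langle M, N\rangle$ yields CBP for $N$; ``symmetric enough'' by itself names no mechanism, and for ${\rm II_1}$ factors the analogous isomorphism fails. The paper instead proves this direction through Ocneanu ultraproducts: $(N^\omega)_{\varphi^\omega} \subset (M^\omega)_{\varphi^\omega}$ is a finite-index inclusion of ${\rm II_1}$ factors, CBP for $M$ gives $((M^\omega)_{\varphi^\omega})' \cap M^\omega = \C 1$, a Pimsner--Popa basis argument then produces a minimal projection in $((N^\omega)_{\varphi^\omega})' \cap M^\omega \supset \rB(N,\varphi)$, and \cite[Theorem 3.5]{HI15} concludes.
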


\begin{proof}
{\bf If $N$ satisfies CBP then $M$ satisfies CBP.} Denote by $(M, \rL^2(M), J, \rL^2(M)_+)$ the standard form of $M$. Fix a normal faithful conditional expectation $\rE_N : M \to N$ with finite index. Denote by $\langle M, N \rangle := (JNJ)' \cap \mathbf B(\rL^2(M))$ the Jones basic construction and by $e_N : \rL^2(M) \to \rL^2(N) : x \xi_\varphi \mapsto \rE_N(x)\xi_\varphi$ the Jones projection. We denote by $\Phi : \langle M, N\rangle \to M$ the canonical normal faithful conditional expectation (see \cite{Ko85}).

Since $N$ satisfies CBP, by \cite[Theorem 3.1]{Ha85}, there exists a faithful state $\varphi \in M_\ast$ such that $\varphi = \varphi \circ \rE_N$ and $(N_\varphi)' \cap N = \C 1$. Put $P = (N_\varphi)' \cap M$ and observe that $P \subset M$ is globally invariant under $\sigma^\varphi$. Let $e : \rL^2(P) \to \C \xi_\varphi$ be the rank-one orthogonal projection. Since $(N_\varphi)' \cap N = \C 1$, we have $\rE_N(x) = \varphi(x) 1$ for every $x \in P$. Let $\langle P, e \rangle = (P \cup \{e\})\dpr$ and observe that $\langle P, e \rangle = \mathbf B(\rL^2(P))$. Denote by $\psi_{\langle P, e \rangle}$ (resp.\ $\psi_{\langle M, N \rangle}$) the natural normal faithful semifinite weight defined on $\langle P, e \rangle$ (resp.\ $\langle M, N\rangle$). Then the map $$V : \rL^2(\langle P, e\rangle, \psi_{\langle P, e\rangle}) \to \rL^2(\langle M, N\rangle, \psi_{\langle M, N\rangle}) : \Lambda_{\psi_{\langle P, e\rangle}}(xey) \mapsto \Lambda_{\psi_{\langle M, N\rangle}}(xe_Ny), \quad x, y \in P$$ is an isometry. Denote by $\mathcal P \subset \langle M, N\rangle$ the weak closure of the (possibly) nonunital $\ast$-subalgebra $\spn \{x e_N y : x, y \in P\}$. Observe that $\sigma_t^{\psi_{\langle M, N\rangle}}(x \, e_N \, y) = \sigma_t^\varphi(x) \, e_N \, \sigma_t^\varphi(y)$ for all $t \in \R$ and all $x, y \in P$. It follows that $\spn \{x e_N y : x, y \in P\}$ is globally invariant under $\sigma^{\psi_{\langle M, N\rangle}}$. Thus, we have $\sigma_t^{\psi_{\langle M, N\rangle}}(1_{\mathcal P}) = 1_{\mathcal P}$ for every $t \in \R$ and $\mathcal P \subset 1_{\mathcal P} \langle M, N\rangle 1_{\mathcal P}$ is globally invariant under $\sigma^{\psi_{\langle M, N\rangle}}$. Observe that $e_N \leq 1_{\mathcal P}$. Since $\psi_{\langle M, N\rangle}(1_{\mathcal P}  \cdot  1_{\mathcal P})$ is semifinite on $\mathcal P$, there exists a $\psi_{\langle M, N\rangle}(1_{\mathcal P} \cdot 1_{\mathcal P})$-preserving conditional expectation $\rE_{\mathcal P} : 1_{\mathcal P}\langle M, N\rangle 1_{\mathcal P} \to \mathcal P$ (see \cite[Theorem IX.4.2]{Ta03}). Then the projection $VV^*$ is nothing but the orthogonal projection $\rL^2(\langle M, N\rangle, \psi_{\langle M, N\rangle}) \to \rL^2(\mathcal P, \psi_{\langle M, N\rangle}(1_{\mathcal P} \cdot 1_{\mathcal P}))$. Write $\rL^2(\mathcal P) = \rL^2(\mathcal P, \psi_{\langle M, N\rangle}(1_{\mathcal P} \cdot 1_{\mathcal P}))$. Thus, the map $\Theta : \langle P, e\rangle \to \mathbf B(\rL^2(\mathcal P))$ defined by $\Theta(a) V = V a$ for $a \in \langle P, e \rangle$ is a unital normal $\ast$-embedding that satisfies $\Theta(xey) = xe_N y$ for all $x, y \in P$. In particular, $\Theta(\langle P, e \rangle) \subset \mathcal P$ and $\Theta : \langle P, e\rangle \to \mathcal P$ is a $P$-$P$-bimodular normal faithful unital completely positive map.

Regard $\langle P, e\rangle = \mathbf B(\rL^2(P))$ and define $\Psi : \mathbf B(\rL^2(P)) \to P$ by the composition $\Psi = \rE_{P} \circ \Phi \circ \Theta$. Then $\Psi$ is a $P$-$P$-bimodular normal faithful completely positive map. Since $\Psi(1) \in \mathcal Z(P)_+$, we may choose a nonzero element $c \in \mathcal Z(P)_+$ so that $z := c^{1/2}\Psi(1)c^{1/2}$ is a nonzero projection in $\mathcal Z(P)$. Then the map $\Psi_z : \mathbf B(\rL^2(Pz)) \to Pz$ defined by $\Psi_z := c^{1/2} \Psi (z \, \cdot \, z) c^{1/2}$ is a $Pz$-$Pz$-bimodular normal unital completely positive map and hence a normal conditional expectation. Therefore $Pz$ is a discrete von Neumann algebra. By \cite[Theorem 3.5]{HI15}, the bicentralizer algebra $\rB(M, \varphi)$ satisfies the following dichotomy: either $\rB(M, \varphi) = \C1$ or $\rB(M, \varphi)$ is a type ${\rm III_1}$ factor. Since the inclusions $\rB(M, \varphi) \subset (M_\varphi)' \cap M \subset (N_\varphi)' \cap M=  P$ are all globally invariant under $\sigma^\varphi$ and since $P$ has a minimal projection, it follows that $\rB(M, \varphi) = \C 1$. Therefore, $M$ satisfies CBP.

{\bf If $M$ satisfies CBP then $N$ satisfies CBP.} Since the inclusion $N \subset M$ has finite index, we may choose a normal faithful conditional expectation $\rE_N : M \to N$ for which there exists $\kappa > 0$ such that $\rE_N(x) \geq \kappa \, x$ for every $x \in M_+$ (see \cite{PP84, Po95}). Let $\varphi$ be a normal faithful state on $M$ such that $\varphi = \varphi \circ \rE_N$. Fix a nonprincipal ultrafilter $\omega \in \beta(\N) \setminus \N$ and denote by $M^\omega$ (resp.\ $N^\omega$) the Ocneanu ultraproduct of $M$ (resp.\ $N$) with respect to $\omega$ (see \cite{Oc85, AH12}). Following \cite[Section 1.3]{Po95}, define the normal faithful conditional expectation $\rE_{N^\omega} : M^\omega \to N^\omega$ by the formula $\rE_{N^\omega}((x_n)^\omega) = (\rE_N(x_n))^\omega$ for every $(x_n)^\omega \in M^\omega$. Then we have $\rE_{N^\omega}(x) \geq \kappa \, x$ for every $x \in (M^\omega)_+$. Put $\mathcal N = (N^\omega)_{\varphi^\omega}$ and $\mathcal M = (M^\omega)_{\varphi^\omega}$ and observe that both $\mathcal N$ and $\mathcal M$ are type ${\rm II_1}$ factors by \cite[Proposition 4.24]{AH12}. Since $\varphi^\omega = \varphi^\omega \circ \rE_{N^\omega}$ and since $\rE_{N^\omega}(x) \in \mathcal N$ for every $x \in \mathcal M$, we have $\rE_{\mathcal N}(x) = \rE_{N^\omega}(x) \geq \kappa \, x$ for every $x \in \mathcal M_+$, where $\rE_{\mathcal N} : \mathcal M \to \mathcal N$ denotes the unique trace preserving conditional expectation. Thus, the inclusion $\mathcal N \subset \mathcal M$ has finite index by \cite[Theorem 2.2]{PP84}.

We first prove that $\mathcal M' \cap M^\omega = \C 1$. Since $M$ satisfies CBP, by \cite[Theorem 3.1]{Ha85}, there exists a normal faithful state $\psi$ on $M$ such that $(M_\psi)' \cap M = \C 1$. Then \cite[Lemma 2.3]{Po81} implies that $((M_\psi)^\omega)' \cap M^\omega = \C 1$. Since $(M_\psi)^\omega \subset (M^\omega)_{\psi^\omega}$, we have $((M^\omega)_{\psi^\omega})' \cap M^\omega = \C 1$. By Connes--St\o rmer transitivity theorem \cite{CS76} (see also \cite[Theorem 4.20]{AH12}), there exists $u \in \mathcal U(M^\omega)$ such that $\psi^\omega = u \varphi^\omega u^*$. Thus, we obtain $$\mathcal M' \cap M^\omega = ((M^\omega)_{\varphi^\omega})' \cap M^\omega = u^* (((M^\omega)_{\psi^\omega})' \cap M^\omega) u = \C 1.$$

We next prove that $\mathcal N' \cap M^\omega$ has a nonzero minimal projection following the lines of \cite[Lemma 3.3]{Po09}.  Since $\mathcal N \subset \mathcal M$ is a finite index inclusion of type ${\rm II_1}$ factors, we may choose a projection $e \in \mathcal M$ such that $\rE_{\mathcal N}(e) = [\mathcal M : \mathcal N]^{-1} 1$. Put $\mathcal P = \{e\}' \cap \mathcal N$ so that $\mathcal P \subset \mathcal N$ is a finite index inclusion of type ${\rm II_1}$ factors and $\mathcal M = \langle \mathcal N, e\rangle$ (see \cite[Corollary 1.8]{PP84}). By \cite[Proposition 1.3]{PP84}, choose a finite basis $(X_j)_{j \in J}$ of $\mathcal N$ over $\mathcal P$. Recall that we have $\sum_{j \in J} X_j e X_j^* = 1$, $\sum_{j \in J} X_j X_j^* = [\mathcal M : \mathcal N]$ and for every $j \in J$, $p_j := \rE_{\mathcal P}(X_j^*X_j)$ is a projection in $\mathcal P$. Since $\sum_{j \in J}  X_j e \, x \, e X_j^* \in \mathcal M' \cap M^\omega = \C 1$ for every $x \in \mathcal N' \cap M^\omega$, we may define the state $\Psi \in (\mathcal N' \cap M^\omega)_\ast$ by the formula $\sum_{j \in J} X_j e \, x \, e X_j^* = \Psi(x) 1$. Moreover, we have $exe = \Psi(x) e$ for every $x \in \mathcal N' \cap M^\omega$. Following \cite[Lemma 3.3]{Po09}, put $b = [\mathcal M : \mathcal N] \cdot \rE_{\mathcal N' \cap \mathcal M}(e) = [\mathcal M : \mathcal N] \cdot \rE^{\varphi^\omega}_{\mathcal N' \cap M^\omega}(e) \in (\mathcal N' \cap \mathcal M)_+$. For every $x \in \mathcal N' \cap M^\omega$, we have
\begin{align*}
\varphi^\omega \left(\sum_{j \in J} X_j e b^{-1/2} \, x \, b^{-1/2} e X_j^*\right) &= \sum_{j \in J} \varphi^\omega ( x \, b^{-1/2} e X_j^*  X_j e b^{-1/2}) = \sum_{j \in J} \varphi^\omega ( x \, b^{-1/2} p_j e b^{-1/2}) \\ 
&= \sum_{j \in J} \varphi^\omega \circ \rE^{\varphi^\omega}_{\mathcal P' \cap M^\omega}( x \, b^{-1/2} p_j e b^{-1/2}) \\
&= [\mathcal M : \mathcal N] \cdot \varphi^\omega ( x \, b^{-1/2} e b^{-1/2}) = \varphi^\omega(x).
\end{align*}
Since $\sum_{j \in J} X_j e b^{-1/2} \, x \, b^{-1/2} e X_j^* \in \mathcal M' \cap M^\omega = \C 1$ for every $x \in \mathcal N' \cap M^\omega$, we have
$$\forall x \in \mathcal N' \cap M^\omega, \quad \sum_{j \in J} X_j e b^{-1/2} \, x \, b^{-1/2} e X_j^* = \varphi^\omega(x) 1.$$ 
Moreover, $f = b^{-1/2} e b^{-1/2} \in \mathcal M$ is a projection such that $f x f = \varphi^\omega(x) f$ for every $x \in \mathcal N' \cap M^\omega$. This implies that the von Neumann algebra $\langle \mathcal N' \cap M^\omega , f\rangle = ((\mathcal N' \cap M^\omega) \cup \{f\})\dpr$ has a minimal projection, namely $f$. Since $f \in \mathcal M$, the von Neumann subalgebra $\langle \mathcal N' \cap M^\omega , f\rangle \subset M^\omega$ is globally invariant under $\sigma^{\varphi^\omega}$. Since the inclusions $\mathcal N' \cap M^\omega \subset \langle \mathcal N' \cap M^\omega , f\rangle \subset M^\omega$ are all globally invariant under $\sigma^{\varphi^\omega}$, we obtain that $\mathcal N' \cap M^\omega$ has a minimal projection as well.

By \cite[Proposition 3.3]{HI15}, we have $\rB(N, \varphi) = ((N^\omega)_{\varphi^\omega})' \cap N = \mathcal N' \cap N$. Since the inclusion $\rB(N, \varphi) = \mathcal N' \cap N \subset \mathcal N' \cap M^\omega$ is globally invariant under $\sigma^{\varphi^\omega}$ and since $\mathcal N' \cap M^\omega$ has a minimal projection, \cite[Theorem 3.5]{HI15} implies that $\rB(N, \varphi) = \C 1$. Therefore, $N$ satisfies CBP.
\end{proof}

\section{Open problems}

Formulated some forty years ago and still open, Connes's Bicentralizer Problem remains one of the most famous unsolved problems in von Neumann algebras. 
It is certainly the central, most important open problem in the theory of type ${\rm III_1}$ factors. 
The fundamental role it plays in unraveling the structure of  type ${\rm III_1}$ factors 
comes from its equivalent form as existence of a normal faithful state with large centralizer  (due to \cite{Ha85}). 
In turn, this latter form of CBP (often accompanied by Connes--Stormer's theorem) allows 
adapting  arguments from II$_1$ factors to the ``III$_1$ factor world''. For instance, 
it has been a key feature in developing a type ${\rm III_1}$ version of the second named author's deformation-rigidity theory, 
which has been initially developed in II$_1$ factor framework (see e.g.\ \cite{HI14, HI15}).

It is somewhat notorious that Connes and Haagerup strongly believed CBP had an affirmative answer. 
But since all efforts to prove it have failed, during the last decade 
there have been attempts to  produce counterexamples as well, in fact  
some of the papers involving the first named author have been motivated 
by such attempts (see e.g.\ \cite{Ho08}). 

However, at this moment, both authors of this paper believe CBP has a positive answer. The purpose of the previous section 
was to offer some supporting evidence in this respect, with its 
partial results bound to become redundant if CBP is proven in its full generality. We'll formulate in this section  
several related problems, 
including some  stronger versions of the CBP conjecture. 

Let us first recall that Connes' Bicentralizer Property for a type ${\rm III_1}$ factor with separable predual $M$ 
was shown in \cite{Ha85} to be equivalent to the weak relative Dixmier property 
of the inclusion $M_\Phi \subset M$, where $\Phi$ is any normal faithful dominant weight on $M$ and $M_\Phi$ denotes the 
fixed point algebra of its automorphism group. 

The terminology  {\it weak relative Dixmier property} for an inclusion of (arbitrary) von Neumann algebras $N\subset M$ 
is in the sense of \cite{Po98},  and it means  
that the convex set $\mathcal K_N(x)=\overline{\text{\rm co}}^w \{uxu^* \mid u\in \mathcal U(N)\}$ has non-empty intersection 
with $N'\cap M$, for any $x\in M$. Note that if $M=\mathbf B(H)$ then this condition for $N\subset M$ is equivalent to 
$N$ being amenable (cf.\ \cite{Sc63}). Note also that in the 
case $N\subset M$ is an irreducible inclusion of factors (i.e.,  if $N'\cap M=\mathbf C1$), 
then for this condition to hold true it is sufficient to have an 
amenable (equivalently approximately finite dimensional, by \cite{Co75}) 
von Neumann subalgebra $B\subset N$ such that $B'\cap M\subset B$ (thus $B'\cap M = \mathcal Z(B)$). 
Indeed, because then by \cite{Sc63} we have $\mathcal K_B(x)\cap B'\cap B\neq \emptyset$ for all 
$x\in M$, and by applying in the factor $N$ the Dixmier averaging theorem \cite{Di57} to an element in this intersection set, one gets $\mathcal K_N(x)\cap \mathbf C1\neq \emptyset$ (see \cite[Remark 3.9]{Ha85}). 

In particular, 
if  $N\subset M$ is an irreducible inclusion of factors that satisfies {\it Kadison's property}, i.e., 
$N$ contains an abelian von Neumann subalgebra that's a MASA in $M$, then $N\subset M$ has the weak relative Dixmier property. 
The existence of a MASA in a subfactor $N\subset M$ clearly implies irreducibility, 
and one of the well known problems  in \cite{Ka67} asks whether the converse is true as well, i.e., if 
Kadison's property actually characterizes irreducibility. 

It is easy to see that if $N\subset M$ is an inclusion of von Neumann algebras with NCE 
and $N$ is semifinite, then $N\subset M$ satisfies the weak relative Dixmier property (see \cite{Po81, Po98}). 
It has in fact been shown in \cite{Po81} that if in addition $N'\cap M\subset N$, then  $N$  contains a MASA of $M$ with NCE 
(so such $N\subset M$ do satisfy Kadison's property), and that if $N\subset M$ is irreducible then $N$ 
contains a hyperfinite subfactor $R \subset N$ with NCE such that $R'\cap M=\mathbf C 1$.

\begin{prob}
Let $N \subset M$ be an irreducible inclusion with NCE of type ${\rm III_1}$ factors with 
separable predual such that  $N$ satisfies CBP.  Does $N$ 
contain an  amenable (or even abelian) von Neumann subalgebra $B\subset N$ with NCE and such  that $B'\cap M\subset B$? Is this at least true if $N$ is the hyperfinite type ${\rm III_1}$ factor? 
\end{prob}

Note that by a result in \cite{GP96}, 
there do exist examples of irreducible inclusions of factors $N\subset M$ with $N$ of type II$_1$, 
$M$ of type II$_\infty$ such that $N$ contains no amenable 
von Neumann subalgebra $B$ with the property 
that $B'\cap M\subset B$. But the examples of irreducible inclusions in \cite{GP96} that do not satisfy Kadison's 
property are not with NCE. Thus,  the problem of whether Kadison's criterion characterizes irreducibility 
for an inclusion of factors seems quite subtle, in its full generality. 

In turn, the weak relative Dixmier property may still be true for arbitrary irreducible inclusions.

\begin{prob}
Let $N \subset M$ be an arbitrary irreducible 
inclusion of factors with separable predual. Does $N\subset M$ have the weak 
relative Dixmier property? Is this at least true when the NCE condition is satisfied? 
\end{prob}

As we mentioned before, if this is true in the case $M$ is type ${\rm III_1}$ and $N$ is its type  ${\rm II_\infty}$ core then CBP holds true. Note that by \cite{Po81}, if $N$ is any non-Gamma type ${\rm II_1}$ factor (e.g., if $N$ is the free group factor $\rL(\mathbf F_n)$, cf \cite{MvN43}) and $M$ 
is the ultrapower factor $N^\omega$, for some nonprincipal ultrafilter on $\mathbf N$, then $N\subset M$ is irreducible, yet $N$ contains no MASAs of $M$. 
But in these examples the larger factor is non-separable. However, such inclusions $N\subset M$ do satisfy the weak relative Dixmier property.


\bibliographystyle{plain}

\begin{thebibliography}{MvN43}

\bibitem[AH12]{AH12} {\sc H. Ando, U. Haagerup}, {\it Ultraproducts of von Neumann algebras.} J. Funct. Anal. {\bf 266} (2014), 6842--6913.

\bibitem[Co72]{Co72} {\sc A. Connes}, {\it Une classification des facteurs de type ${\rm III}$.} Ann. Sci. \'{E}cole Norm. Sup. {\bf 6} (1973), 133--252.

\bibitem[Co75]{Co75} {\sc A. Connes}, {\it Classification of injective factors}, Ann. Math. {\bf 104} (1976), 73--115.

\bibitem[Co80]{Co80} {\sc A. Connes}, {\it Classification des facteurs.} In ``Operator algebras and applications, Part 2 (Kingston, 1980)'' Proc. Sympos. Pure Math. {\bf 38} Amer. Math. Soc., Providence, 1982, pp.\ 43--109.

\bibitem[Co85]{Co85} {\sc A. Connes}, {\it Factors of type ${\rm III_1}$, property $L'_\lambda$ and closure of inner automorphisms.} J. Operator Theory {\bf 14} (1985), 189--211.

\bibitem[CS76]{CS76} {\sc A. Connes, E. St\o rmer}, {\it Homogeneity of the state space of factors of type ${\rm III_1}$.} J. Funct. Anal. {\bf 28} (1978), 187--196.


\bibitem[Di57]{Di57} {\sc J. Dixmier}, {\it Les alg\`ebres d'op\'erateurs dans l'espace hilbertien.} 
Gauthier-Villars, Paris 1957, 1969. 

\bibitem[GP96]{GP96} {\sc L. Ge, S. Popa},  {\it On some decomposition properties
for factors of type ${\rm II_1}$.} Duke Math. J. {\bf 94} (1998), 79--101.

\bibitem[Ha85]{Ha85} {\sc U. Haagerup}, {\it Connes' bicentralizer problem and uniqueness of the injective factor of type ${\rm III_1}$.} Acta Math. {\bf 69} (1986), 95--148.


\bibitem[Ho08]{Ho08} {\sc C. Houdayer}, {\it Free Araki-Woods factors and Connes' bicentralizer problem.} Proc. Amer. Math. Soc. {\bf 137} (2009), 3749--3755.

\bibitem[HI14]{HI14} {\sc C. Houdayer, Y. Isono}, {\it Free independence in ultraproduct von Neumann algebras and applications.} J. London Math. Soc. {\bf 92} (2015), 163--177.

\bibitem[HI15]{HI15} {\sc C. Houdayer, Y. Isono}, {\it Unique prime factorization and bicentralizer problem for a class of type ${\rm III}$ factors.} Adv. Math. {\bf 305} (2017), 402--455.

\bibitem[HU15]{HU15} {\sc C. Houdayer, Y. Ueda}, {\it Asymptotic structure of free product von Neumann algebras.} Math. Proc. Cambridge Philos. Soc. {\bf 161} (2016), 489--516.

\bibitem[HV12]{HV12} {\sc C. Houdayer, S. Vaes}, {\it Type ${\rm III}$ factors with unique Cartan decomposition.} J. Math. Pures Appl. {\bf 100} (2013), 564--590.

\bibitem[Ka67]{Ka67} {\sc R.V. Kadison}, {\it Problems on von Neumann algebras.} Baton Rouge Conference, 1967 (unpublished).

\bibitem[Ko85]{Ko85} {\sc H. Kosaki}, {\it Extension of Jones' theory on index
to arbitrary factors.} J. Funct. Anal. {\bf 66} (1986), 123--140.

\bibitem[MvN43]{MvN43} {\sc F. Murray, J. von Neumann}, {\it Rings of operators
${\rm IV}$.} Ann. Math. {\bf 44} (1943), 716--808.

\bibitem[Oc85]{Oc85} {\sc A. Ocneanu}, {\it Actions of discrete amenable groups on von Neumann algebras.} Lecture Notes in Mathematics, {\bf 1138}. Springer-Verlag, Berlin, 1985. iv+115 pp.

\bibitem[PP84]{PP84} {\sc M. Pimsner, S. Popa}, {\it Entropy and index for subfactors.} Ann. Sci. \'Ecole Norm. Sup. {\bf 19} (1986), 57--106.

\bibitem[Po81]{Po81} {\sc S. Popa}, {\it On a problem of R.V.\ Kadison
on maximal abelian $\ast$-subalgebras in factors.} Invent. Math. {\bf 65} (1981), 269--281. 

\bibitem[Po82]{Po82} {\sc S. Popa}, {\it Singular maximal abelian $\ast$-subalgebras in continuous von Neumann algebras.} J. Funct. Anal. {\bf 50} (1983),  151--166.


\bibitem[Po95]{Po95} {\sc S. Popa}, {\it Classification of subfactors and their endomorphisms.} CBMS Regional Conference Series in Mathematics, {\bf 86}. Published for the Conference Board of the Mathematical Sciences, Washington, DC; by the American Mathematical Society, Providence, RI, 1995. x+110 pp.

\bibitem[Po98]{Po98} {\sc S. Popa}, {\it On the relative Dixmier property for
inclusions of $\rC^*$-algebras.} J. Funct. Anal. {\bf 171} (2000), 139--154.

\bibitem[Po03]{Po03} {\sc S. Popa}, {\it Strong rigidity of $\rm II_1$ factors arising from malleable actions of w-rigid groups ${\rm I}$.} Invent. Math. {\bf 165} (2006), 369--408.

\bibitem[Po09]{Po09} {\sc S. Popa}, {\it On the classification of inductive limits of ${\rm II_1}$ factors with spectral gap.} Trans. Amer. Math. Soc. {\bf 364} (2012), 2987--3000.

\bibitem[Po16]{Po16} {\sc S. Popa}, {\it Constructing MASAs with prescribed properties.} To appear in Kyoto J. Math. {\tt arXiv:1610.08945}

\bibitem[Sc63]{Sc63} {\sc J. Schwartz}, {\it Two finite, non-hyperfinite, non-isomorphic factors.} Comm. Pure Appl. Math. {\bf 16} (1963), 19--26. 

\bibitem[Ta03]{Ta03} {\sc M. Takesaki}, {\it Theory of operator algebras. ${\rm II}$.}
Encyclopaedia of Mathematical Sciences, {\bf 125}. Operator Algebras and Non-commutative Geometry, 6. Springer-Verlag, Berlin, 2003. xxii+518 pp.

\end{thebibliography}

\end{document}